\newtheorem{theorem}{Theorem}[section]
\newtheorem{lemma}[theorem]{Lemma}
\newtheorem{claim}[theorem]{Claim}
\newtheorem{corollary}[theorem]{Corollary}
\newtheorem*{Ramsey's theorem}{Ramsey's theorem}
\theoremstyle{definition}
\newtheorem{definition}[theorem]{Definition}
\newcommand{\Cross}{\mathrm{Cross}}
\DeclareMathOperator{\Fin}{\mathsf{Fin}}
\DeclareMathOperator{\R}{\mathcal{R}} 
\theoremstyle{remark}
\newtheorem {question}[theorem]{Question}
\numberwithin{equation}{section}
\newcommand{\uhr}{\upharpoonright}
\newcommand{\restrict}{\upharpoonright}
\def\mcal{\mathcal}
\def\t{\tilde}
\def\h{\hat}
\title{Extracting randomness within a subset is hard}
\keywords{computability theory, algorithmic randomness,
Mathias forcing}
\author{Bj\o rn Kjos-Hanssen}
\email{bjoern.kjos-hanssen@hawaii.edu}
\address{Department of Mathematics, University of Hawai\textquoteleft i at M\=anoa, Honolulu, HI 96822, USA}
\author{Lu Liu }
\email{g.jiayi.liu@gmail.com}
\address{Department of Mathematics and Statistics, Central South University,
Changsha, 410083, China}
\subjclass[2010]{Primary 68Q30 ; Secondary 03D32 03D80 28A78}
\thanks{This work was partially supported by a grant from the Simons Foundation (\#315188 to Bj\o rn Kjos-Hanssen).
Lu Liu is partially supported by Natural Science Foundation of Hunan Province of China
2018JJ3623.}
\begin{document}
	\begin{abstract}
	The tree forcing method of Liu enables the cone avoiding
	of bounded enumeration of a given tree,
	within subsets or co-subsets
	of an \emph{arbitrary} given set,
	provided the given tree
	does not admit computable bounded enumeration.
	Using this result, he settled and reproduced
	a series of problems and results in reverse mathematics
	and the theory of algorithmic randomness, including showing that every 1-random set has an infinite subset or co-subset which computes no 1-random set.

	In this paper,
	we show that for any given 1-random set $A$, there
	exists an infinite subset $G$ of $A$ such that $G$ does
	not compute any set with positive effective Hausdorff
	dimension.

	In particular we answer in the affirmative Kjos-Hanssen's 2006 question whether each 1-random set has an infinite subset which computes no 1-random
	set.

	The result is surprising in that
	the tree forcing technique
	seems to heavily
	rely on subset co-subset combinatorics,
	whereas this result does not.
	\end{abstract}

\maketitle

\section{Introduction}
	Computability theory aims to classify real numbers, or equivalently infinite binary sequences, by their relative computational power.
	This is done by means of several orderings, the most fundamental of which may be that of the Turing degrees.
	For instance, $\mathbf 0$ is the Turing degree of computable sequences, which are all regarded as trivial. Next, $\mathbf 0'$ is the Turing degree of the sequence of answers for the halting problem for Turing machines. It is also the Turing degree of many natural problems such as solvability of diophantine equations.

	On the other hand, if we choose the bits of our sequence randomly enough to have no computable pattern, roughly speaking, we get a collection of Turing degrees $\mathbf r$ that are called \emph{Martin-L\"of random}. Some of these are comparable with $\mathbf 0'$, but most are not. Such a degree $\mathbf r$ consists of a random sequence $R$, which can also be viewed as a set $R\subseteq\omega=\mathbb N$, together with all sequences that are computationally equivalent to $R$.
	The problem of computing a Martin-L\"of random set also arises in Reverse Mathematics in the guise of the formal system $\mathsf{WWKL}_0$ (Weak Weak K\"onig's Lemma), discussed below.

	Martin-L\"of random sets are also known as 1-random, and can be contrasted with ``more random'' sets (2-random and above) and ``less random'' sets.
	When can we get more randomness from less? This idea of \emph{extracting randomness} under various
	conditions has drawn attention from computability
	theorists. Existing results indicate that
	from a set with a low degree of randomness we cannot extract a set with a high degree
	of randomness. For example, Miller
	\cite{miller2011extracting}
	showed that there exists some set of effective Hausdorff
	dimension 1/2
	from which no 1-random set can be computed, thus
	separating the computability theoretic strengths
	(Muchnik degrees) of the two classes.
	Therefore, it is reasonable to believe that
	extracting randomness under various combinatorial
	conditions will also fail.
	In \cite{Kjos-Hanssen2009Infinite,kjos2011strong},
	Kjos-Hanssen studied the problem of extracting
	randomness within an infinite subset of a given
	1-random set. There he showed that
	every 2-random set admits an infinite subset that does not compute any
	1-random set. Later Liu \cite{liu2015cone}
	showed that every 1-random set admits an infinite subset or co-subset that does not compute any
	1-random set. Here we obtain the natural common strengthening of these results of Miller, Kjos-Hanssen, and Liu.

	Extracting randomness under various
	combinatorial conditions is also an interesting issue
	in reverse mathematics. In reverse mathematics,
	there are questions
	concerning whether an arithmetical statement
	implies $\mathsf{WWKL}_0$.
	$\mathsf{WWKL}_0$ is the statement that
	any positive measure binary tree admits a path.
	Or, roughly speaking, there exist 1-random
	sets.
	Proving that some
	arithmetical
	statement does not imply $\mathsf{WWKL}_0$
	involves
	constructing a set satisfying certain conditions
	while
	avoiding computing any 1-random set. \cite{Ambos-Spies2004Comparing}
	separates $\mathsf{DNR}$ from $\mathsf{WWKL}_0$.
	They construct a diagonal non recursive function
	that does not compute any 1-random set.
	This result was reproduced in \cite{liu2015cone}
	using another proof.
	\cite{liu2015cone} proved that
	if a tree does not admit bounded enumeration
	(see Definition \ref{boundenumeration}),
	then any given set $A$ admits an
	infinite
	subset or co-subset that also does not compute
	a bounded enumeration of that tree (thus does not compute
	a path of that tree). Because the tree
	defining $\mathsf{WWKL}_0$ does not admit computable
	bounded enumeration, the result
	therefore implies that
	$\mathsf{RT_2^2}$
	does not imply $\mathsf{WWKL}_0$.
	In addition to 1-randomness, many other randomness
	notions are also defined by trees,
	such as effective Hausdorff dimension. 
	As far as we know, there is
	no natural example of a tree that does not admit a computable
	path but admits a computable bounded enumeration.
	Therefore, generally speaking,
	the result of
	\cite{liu2015cone} means that extracting
	randomness under a subset co-subset condition is
	almost impossible.
	There is also ongoing research in reverse mathematics
	that seeks to construct computationally weak
	(in senses other than that of the inability to extract randomness)
	solutions
	under various conditions
	(other than subset co-subset)
	\cite{chong2014metamathematics,lerman2013separating,patey2015iterative,wang2014definability}.
	Such constructions generally yield
	conclusions of the form ``$\Gamma$ does not imply $\Psi$''.
	We are hopeful that our
	method can be adapted to construct computationally weak
	solutions
	of a random instance.

	In this paper, we adapt the proof
	in \cite{liu2015cone} to show that every 1-random
	set admits an infinite subset
	(instead of ``subset or co-subset'') that does not compute
	any 1-random set. Thus we answer a question of
	Kjos-Hanssen from the American Institute of Mathematics workshop ``Effective Randomness'' held in 2006. 

	The result is interesting
	because it seems that the combinatorial argument in
	\cite{liu2015cone} relies heavily on the fact that
	$A$ and $\overline{A}$ form a partition of $\omega$.

	We end this section by
	giving some definitions and the
	main result.
	In the following, we fix
	a universal prefix free machine
	$U$ and let $K_U(\rho)$ denote
	the corresponding Kolmogorov complexity
	of $\rho\in 2^{<\omega}$. For $X\in 2^\omega$
	we write
	$X\uhr M$ to denote the initial segment of $X$
	of length $M$.
	\begin{definition}
		For a set $A$, we say
		$A$ is \emph{effectively compressible} iff there exists a
		computable function $f:\omega\rightarrow\omega$ such that
		$K_U(A\upharpoonright f(n))\leq f(n) - n$.
	\end{definition}

	For any set $Q$, let $\Fin(Q)$ be the set of finite subsets of $Q$.
	The set $j^n$ consists of all functions $\sigma:n\to j$, where $j=\{0,1,\dots,j-1\}$.
	A set $A$ is \emph{c.e.}~if it is computably enumerable, and \emph{co-c.e.}~if its complement is c.e.
	\begin{definition}[ Beigel et al.~\cite{beigel2006enumerations}]
	\label{boundenumeration}
		Given a set $S\subseteq
		j^{<\omega}$,
		an $l$-\emph{enumeration} of
		$S$ is a function $g:\omega\rightarrow
		\Fin(j^{<\omega})$ such that
		$|g(n)|\leq l$ and $g(n)\cap S\cap j^n\ne\emptyset$
		for all $n$.
		A \emph{bounded enumeration} of
		$S$ is an $l$-enumeration for some $l\in\omega$.
		If $D\subseteq\omega$, we say that $S$ admits a $D$-computable $l$-enumeration (resp.~bounded enumeration) if
		there is a $D$-computable function that is an $l$-enumeration (resp.~bounded enumeration) of $S$.
	\end{definition}

	\begin{theorem}\label{th1}
	Given a set $A$ that is not effectively compressible,
	let $S^u\subseteq j^{<\omega}$,
	$u\in\omega$, be
	a family of co-c.e. sets such that
	none of the $S^u,u\in\omega$ admits
	computable bounded enumeration.
	Then there exists an infinite subset
	of $A$, namely $G$, such that none of the
	$S^u$
	admits bounded enumeration computable in $G$.
	\end{theorem}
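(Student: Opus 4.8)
The plan is to build $G$ by a Mathias-style forcing conducted \emph{below} $A$, arranged so that the combinatorics of the partition $A,\overline{A}$ exploited in \cite{liu2015cone} is replaced by an incompressibility invariant of $A$ alone. A condition is a pair $(F,E)$ in which $F$ is a finite subset of $A$ (the stem, an initial segment of the eventual $G$), $\max F<\min E$, and $E$ is a computable infinite ``reservoir'' carrying a largeness promise: reading off the characteristic sequence of $A$ along $E$ still yields a set that is not effectively compressible. When $E=\omega$ this promise is exactly the hypothesis on $A$, so $(\emptyset,\omega)$ is a condition, and one checks easily that the promise is inherited by every final segment of $E$. We set $G=\bigcup_s F_s$ for a sufficiently generic descending sequence of conditions.

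We meet two families of requirements. The infinitude requirements ``$|G|\ge j$'' are harmless: a finite set is trivially effectively compressible, so the promise forces $E\cap A$ to be infinite, and we may adjoin $\min(E\cap A)$ to $F$ and pass to the final segment of $E$ above it. The diagonalization requirements assert, for each $n$ and each pair $\langle e,k\rangle$, that $\Phi_e^{G}$ is not a $k$-enumeration of $Q^n$; since a finite enumeration is a $k$-enumeration for some $k$, meeting all of these suffices. Given a condition $(F,E)$ and a triple $\langle n,e,k\rangle$ we use the usual forcing dichotomy: \emph{either} there is an extension $(F',E')$ and an $m$ forcing $\Phi_e^{G}(m)$ either to diverge, or to converge to (the index of) a finite set that is not a legal $k$-enumeration entry at $m$ --- too large, or containing a string shorter than $m$, or disjoint from $Q^n$, the last being a $\Sigma_1^0$ event and hence verifiable --- in which case we pass to that extension and the requirement is satisfied permanently; \emph{or} no such extension exists, and then the data of the condition yields a contradiction.

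The content of the proof, and the part genuinely modified from \cite{liu2015cone}, is this last step. In the second alternative, for every extension $(F',E')$ and every $m$: some further extension makes $\Phi_e(m)$ converge, and whenever it converges with use bounded by $\max F'$ the output is a legal entry meeting $Q^n$. Searching --- over finite subsets of $E$ placed in successively higher, hence pairwise disjoint, blocks, which is legitimate because $E$ is computable --- one manufactures for each $m$ a finite set $D_m$ of strings of length $\ge m$ and size $\le k$, produced by a stem $F'_m\supseteq F$ lying in such a block. The set of $m$ for which $D_m$ fails to meet $Q^n$ is c.e.; if it is finite, the tail of the sequence $(D_m)$ is, after a shift, a computable finite enumeration of $Q^n$, contradicting the hypothesis on $Q^n$; if it is infinite, we may computably thin it out, and for the surviving $m$ the stem $F'_m$ cannot be a subset of $A$ (else its block would give a valid extension and $D_m$ would meet $Q^n$), so each of these disjoint blocks is pinned down, with little information, as a finite set known to contain a non-element of $A$ --- which we convert into a computable length function witnessing that $A$ \emph{is} effectively compressible, contradicting the hypothesis on $A$. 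The delicate point, which I expect to be the main obstacle, is to run the search so that these ``bad'' blocks are not merely disjoint but small enough (e.g.\ with $\sum_i 2^{-|B_i|}=\infty$ for the blocks $B_i$ encountered) that knowing each contains a non-element of $A$ saves enough bits to actually compress $A$; arranging this bookkeeping, together with the routine fairness argument interleaving all requirements, is what remains.
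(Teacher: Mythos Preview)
Your approach differs substantially from the paper's: you work with ordinary Mathias conditions $(F,E)$ where $F\subseteq A$ and $E$ is a single computable reservoir carrying an incompressibility promise, whereas the paper forces with tuples $(k,\sigma_0,\dots,\sigma_{k-1},P)$ in which $P$ is a $\Pi^0_1$ class of ordered $k$-partitions of $\omega$, and the central diagonalization step (Lemma~\ref{lem5}) refines every part into many sub-parts via a ``Cross'' operation indexed by the $e$-disperse subfamilies of a suitably chosen $2ke$-disperse collection of clopen sets.

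The point you flag as ``delicate'' is a genuine gap, not a bookkeeping issue. In your bad branch you learn, for each surviving $m$, that one specific finite set $F'_m\subseteq E$ is not contained in $A$. This excludes a single bit-pattern of length $|F'_m|$ and therefore saves only about $2^{-|F'_m|}$ bits in describing $A$ along that block; unbounded compression would require $\sum_m 2^{-|F'_m|}=\infty$, but nothing in your search bounds $|F'_m|$. Indeed $\Phi_e$ may be arranged so that convergence at argument $m$ demands an oracle with at least $2^m$ ones above $\max F$, forcing $|F'_m|\ge 2^m$ and making the series summable. The paper's compressibility lemma (Lemma~\ref{lem4}) escapes this precisely because its conditions build in a \emph{two}-partition of every part: the class $T_V^{c}$ ranges over all splittings $X_l=Z_{2l}\cup Z_{2l+1}$, so emptiness of $[T^{\tilde c_{r'}}_{Q^n\cap 2^s}]$ says that for \emph{every} string $\rho$ on a block, either $\rho$ or its bitwise complement $\bar\rho$ already forces disagreement. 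That complementary-pair dichotomy rules out at least half of all strings on each block and hence saves one full bit per block irrespective of the block's length. Your single-reservoir conditions carry no analogue of this pairing, and I do not see how to manufacture one without passing to a multi-part, partition-class notion of condition---which is essentially what the paper does.
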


	\begin{corollary}\label{coro}
		For any 1-random set $A$, there exists an infinite subset
		of $A$, namely $G$, such that $G$ does not compute any
		set with positive effective Hausdorff dimension.
	\end{corollary}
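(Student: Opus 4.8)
The plan is to derive the corollary directly from Theorem~\ref{th1}, so the whole task is to package "positive effective Hausdorff dimension" into a suitable sequence of $\Pi^0_1$ classes and to check the two hypotheses. Recall (Mayordomo's characterization) that a real $Y$ has positive effective Hausdorff dimension iff $\liminf_n K_U(Y\upharpoonright n)/n>0$, equivalently iff there are a rational $\epsilon\in(0,1)$ and a $d\in\omega$ with $K_U(Y\upharpoonright n)\ge\epsilon n-d$ for all $n$. For each such pair $(\epsilon,d)$ let $S^{\epsilon,d}$ be the tree of all $\rho\in 2^{<\omega}$ with $K_U(\rho\upharpoonright n)\ge\epsilon n-d$ for every $n\le|\rho|$. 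Since $K_U$ is upper semicomputable, the set of $\rho$ violating this condition is c.e.\ uniformly in $(\epsilon,d)$, so these trees form a uniformly $\Pi^0_1$ sequence, and $[S^{\epsilon,d}]$ is exactly the set of $Z$ with $K_U(Z\upharpoonright n)\ge\epsilon n-d$ for all $n$. I would list all the $S^{\epsilon,d}$ (over rational $\epsilon\in(0,1)$ and $d\in\omega$) as $Q^0,Q^1,\dots$.

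Next I would verify the hypotheses of Theorem~\ref{th1} for a given $1$-random real $A$ and this list. For incompressibility: by the Levin--Schnorr theorem there is a $c$ with $K_U(A\upharpoonright m)\ge m-c$ for all $m$, so if a computable $f$ witnessed $K_U(A\upharpoonright f(n))\le f(n)-n$ then for $n>c$ we would get $f(n)-c\le K_U(A\upharpoonright f(n))\le f(n)-n<f(n)-c$, a contradiction; hence $A$ is not effectively compressible. That none of the $Q^n$ admits a computable finite enumeration is the one genuinely content-bearing point. If $h$ were a computable $k$-enumeration of $S^{\epsilon,d}$, then for each $n$ the finite set $D_{h(n)}$ would contain some $\rho$ that is a node of $S^{\epsilon,d}$ with $|\rho|\ge n$, and being a node it satisfies $K_U(\rho)\ge\epsilon|\rho|-d\ge\epsilon n-d$. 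But $D_{h(n)}$ is computed from $n$ and has at most $k$ elements, so $K_U(\rho)\le K_U(n)+O(1)$ for every $\rho\in D_{h(n)}$; comparing the two bounds gives $\epsilon n-d\le O(\log n)$, which fails for large $n$. (If $S^{\epsilon,d}$ happens to be empty the claim is vacuous.) So Theorem~\ref{th1} applies and yields an infinite $G\subseteq A$ such that no $Q^n$ admits a $G$-computable finite enumeration.

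Finally I would close the loop. Suppose toward a contradiction that $G$ computes a real $Y$ of positive effective Hausdorff dimension. Fix $(\epsilon,d)$ with $Y\in[S^{\epsilon,d}]$, say $S^{\epsilon,d}=Q^m$. Then the function sending $n$ to the canonical index of $\{Y\upharpoonright n\}$ is computable from $Y$, hence from $G$, and it is a strict $1$-enumeration of $Q^m$: each $\{Y\upharpoonright n\}$ is a singleton consisting of a length-$n$ string that is a node of $S^{\epsilon,d}$ because $Y$ is a path through it. This contradicts the conclusion of Theorem~\ref{th1}, so $G$ computes no real of positive effective Hausdorff dimension; in particular, since every $1$-random real has effective Hausdorff dimension $1$, $G$ computes no $1$-random real, answering Kjos-Hanssen's question negatively.

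The only delicate design choice is the family $\{S^{\epsilon,d}\}$ itself: it must be uniformly $\Pi^0_1$, must contain every positive-dimension real as a path of some member, and must have every member fail to admit a computable finite enumeration. The first and third are handled by the Kolmogorov-complexity observations above; the second is handled by using the \emph{global} bound $K_U(Y\upharpoonright n)\ge\epsilon n-d$ in place of the merely eventual bound $K_U(Y\upharpoonright n)\ge\epsilon n$, so that no finite initial segment of $Y$ needs special treatment. Everything else is bookkeeping on top of Theorem~\ref{th1}.
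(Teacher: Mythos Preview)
Your proof is correct and takes essentially the same approach as the paper, which proves the corollary in a single sentence by noting that the sequence of trees defining ``effective Hausdorff dimension reals'' do not admit computable finite enumeration. You have simply filled in the details the paper leaves implicit: the precise family $\{S^{\epsilon,d}\}$, the Levin--Schnorr verification that a $1$-random real is not effectively compressible, the counting argument that no $S^{\epsilon,d}$ admits a computable finite enumeration, and the observation that computing a path through one of these trees yields a $G$-computable $1$-enumeration of it.
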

	\begin{proof}
		The corollary follows by noting that the sequence
		of trees defining ``positive effective Hausdorff dimension''
		does not admit a computable
		bounded enumeration,
		that 1-randomness implies not being
		effectively compressible, and that the infinite subset $G$ of $A$
		gives the bounded enumeration (in fact, 1-enumeration) given by $g(n)=\{G\uhr n\}$.

		To see the former, note that the trees can be taken to be
		\[
			T_{m, c}=\{\sigma\in 2^{<\omega}: (\forall k<|\sigma|)\, K(\sigma\restrict k)\ge k/m-c\},\quad m>0, c,m\in\omega.
		\]
		as $X$ has positive effective Hausdorff dimension if and only if
		\[
			\exists m,c\quad\forall n\quad X\restrict n\in T_{m,c}.
		\]
		If $T_{m,c}$ has a computable $l$-enumeration then we can describe $\sigma\in T_{m,c}$
		by giving $n=|\sigma|$ and the index of $\sigma$ in the list of up to $l$ elements of $S\cap 2^n$.
		This would show $K(\sigma)\le^+ 2(\log n + \log l)$ which for large $n$ contradicts $K(\sigma)\ge |\sigma|/m-c$.
	\end{proof}

	The \emph{complex packing dimension} of a set $A$ was defined in \cite{kjos2013randomness} to be the supremum of $\inf_{n\in M}K(A\restrict n)/n$ over all infinite computable sets $M$. In \cite{kjos2013randomness} sets of positive complex packing dimension were shown to be computationally weak, in that we cannot uniformly extract a stochastically bi-immune set from them. Here we obtain a result showing sets of positive complex packing dimension are computationally strong, or at least hard to compute.

	\begin{corollary}\label{bjoernNew}
		For any 1-random set $A$, there exists an infinite subset
		of $A$, namely $G$, such that $G$ does not compute any
		set with positive complex packing dimension.
	\end{corollary}
	\begin{proof}
		Similarly to the proof of Corollary \ref{coro}: take the trees to be
		\[
			T_{m,M, c}=\{\sigma\in 2^{<\omega}: (\forall k\in M)\,K(\sigma\restrict k)\ge k/m-c\}, \quad m>0, c,m\in\omega, M\in\mathfrak C
		\]
		where $\mathfrak C$ is the collection of all infinite computable sets.
		These trees are all co-c.e. The fact that they are not \emph{uniformly} co-.c.e.~is not a problem since Theorem \ref{th1} uses a construction that deals with each tree in forcing requirements.
	\end{proof}

	The remainder of the paper
	is dedicated to proving Theorem
	\ref{th1}.
	In Section \ref{sec2}, we introduce the forcing conditions.
	We introduce the
	requirements in Section \ref{secframe}, and
	a general scheme of the proof
	is also described there. Section \ref{sec3} is devoted
	to the proof of the main lemma, Lemma \ref{lem3}.

	\subsection{Notation}
		We use $\Psi$, $\Psi_e$ to denote
		a Turing functional, the Turing functional
		with index $e$ respectively.
		As there is an effective bijection between $\omega$ and $\Fin[j^{<\omega}]$,
		we shall assume that for every Turing functional
		$\Psi$, every oracle $X$,
		there exists $l_\Psi\in\omega$
		such that
		$\Psi^X$ is computing an $l_\Psi$-enumeration
		of $j^{<\omega}$ and $\Psi^X(n)\downarrow\rightarrow
		\Psi^X(n)\subseteq j^n$ for all $n\in\omega$.
		So
		$\rho\in\Psi(n)$ means that $\rho$
		is an element of $\Psi(n)$;
		$\Psi(n)\cap T$ refers to the intersection
		of the set $\Psi(n)$ and $T$.
		We sometimes regard a 0-1 sequence
		$\rho\in 2^{<\omega}$ or
		$X\in 2^\omega$
		as a set of integers and use
		$\rho\subseteq \tau$ to denote
		a set $\rho$ being a subset of $\tau$;
		$\rho\cap \tau$ to denote the string
		with $(\rho\cap \tau)(i) = \rho(i)\cdot \tau(i)$.
		For $\rho\in 2^{<\omega}$, $|\rho|$
		refers to the length of $\rho$.
		For $\rho\in 2^{<\omega}$,
		$\tau\in 2^{<\omega}$ or $\tau\in 2^\omega$,
		we use $\rho\prec\tau$ to denote
		$\tau$ being an extension of $\rho$;
		we write $\tau/\rho$ for the string obtained
		by replacing the first $|\rho|$ bits of
		$\tau$ by $\rho$;
		we use $\bar{\rho}$ to denote the string
		$(1-\rho(0))(1-\rho(1))\cdots$;
		we use $\rho\uhr_{a}^b$ to denote the binary string
		with $\rho\uhr_a^b(i) = \rho(i+a)\wedge
		\ |\rho\uhr_a^b\ |=b-a+1$.
		For a co-c.e. or c.e. set $W$, $W[t]$
		denote the set computed by time $t$.

\section{Forcing conditions }\label{sec2}
	We begin by reviewing Mathias forcing and the forcing conditions
	introduced in \cite{Liu2012RT22,liu2015cone}.
	We say $X\in 2^\omega$ is a \emph{$k$-partition} iff
	\begin{itemize}
		\item $X=X_0\oplus X_1\oplus\cdots\oplus X_{k-1}$;
		\item $\bigcup_{i=0}^{k-1}X_i=\omega$.
	\end{itemize}

	A class
	$Q\subseteq 2^\omega$
	is a \emph{$k$-partition class} iff
	for every $ X\in Q$, $X$ is a
	$k$-partition.

	\begin{definition}[Mathias condition]
		\label{def2}
		A \emph{Mathias condition} is a pair $(\sigma,X)$
		with $\sigma \in 2^{<\omega}$ and $X \in 2^\omega$.

		We say that $(\tau,Y)$ \emph{extends} the Mathias condition $(\sigma,X)$
		iff $\sigma \preceq \tau$ and $Y/\tau \subseteq X/\sigma$.
		Write $(\tau,Y)\leq (\sigma,X) $ to denote
		the extension relation.

		We say that a set $G$ \emph{satisfies} the Mathias condition
		$(\sigma,X)$ if $\sigma \prec G$
		and $G \subseteq X/\sigma$.
	\end{definition}

	\begin{definition}[Tree forcing conditions]
		The forcing conditions we use
		to construct $G$ are tuples
		$(k,\sigma_0,\ldots,\sigma_{k-1},Q)$,
		where $k>0$, $\sigma_i \in
		2^{<\omega}$, and $\sigma_i\subseteq A$ for
		all $i\leq k-1$,
		and $Q$ is a nonempty $\Pi^{0}_1$ $k$-partition class.
		Moreover, for every $X_0 \oplus \cdots \oplus X_{k-1} \in Q$ and
		every $i\leq k-1$, $\sigma_i\subseteq X_i\cap A$.
		We regard each $X_0 \oplus \cdots \oplus X_{k-1} \in Q$
		s representing $k$ many
		Mathias conditions $(\sigma_i,X_i)$, $i < k$.
	\end{definition}

	\begin{definition}
		\label{extend}
		We say that a condition
		$d'=(k',\sigma_0',\ldots,\sigma_{k'-1}',Q')$ \emph{extends}
		a condition
		$d=(k,\sigma_0,\ldots,\sigma_{k-1},Q)$,
		(henceforth $d'\leq d$), if there is a function $f : k'
		\rightarrow k$ such that
		\[
			\forall i<k'\ \forall Y_0 \oplus
			\cdots \oplus Y_{k'-1} \in Q'
			\ \exists X_0 \oplus \cdots \oplus
			X_{k-1} \in Q \ \big[
			(\sigma_i',Y_i)\leq (\sigma_{f(i)},X_{f(i)})\big].
		\]
		In this case, we say that
		\begin{itemize}
			\item$f$ witnesses the extension $d'\le d$;
			\item part $i$ of
			the condition $d'$ refines part $f(i)$ of
			the condition $d$.\footnote{Strictly speaking we have not defined ``part $i$''. We could also say: $d'$ $f$-refines $d$.}
		\end{itemize}
	\end{definition}

	\begin{definition}
		\label{def-sat}
		We say that a set
		$G$ \emph{satisfies}
		condition $(k,\sigma_0,\ldots,\sigma_{k-1},Q)$
		iff there is an $X_0 \oplus
		\cdots \oplus X_{k-1} \in Q$ such that
		$G$ satisfies some $(\sigma_i,X_i)$.
		In this case, we also say that $G$
		satisfies $(k,\sigma_0,\ldots,\sigma_{k-1},Q)$
		\emph{on part $i$}.
	\end{definition}
	We assume that for each Turing functional $\Psi$
	there exists $l_\Psi$ depending on $\Psi$
	such that for every $X$, $\Psi^X$ is
	an $l_\Psi$-enumeration with $\Psi^X(m)\downarrow\rightarrow
	\Psi^X(m)\subseteq j^m$.

	For each Turing functional $\Psi$ and $u\in\omega$, we need to satisfy
	the \emph{requirement} $\R_{\Psi}^u$:
	\[
		\tag{$\R_{\Psi}^u$}
		\text{$\Psi^G$ is not an $l_\Psi$-enumeration of $S^u$ if
		$\Psi^G$ is total.}
	\]
	\begin{definition}\label{def-force}
		We say
		condition $d$ \emph{forces} requirement
		$\mathcal{R}$ \emph{on part $i$} iff every $G$
		satisfying $d$ on part $i$
		also satisfies requirement
		$\mathcal{R}$.
		We say condition $d$ \emph{forces}
		requirement $\mathcal{R}$
		iff it forces $\mathcal{R}$ on all
		parts.
	\end{definition}

	\begin{definition}
		We say part $i$ of condition
		$c=(k,\sigma_0,\ldots,\sigma_{k-1},Q)$ is \emph{acceptable}
		iff there exists
		$X_0
		\oplus \cdots \oplus X_{k-1} \in Q$
		such that $X_i \cap A$
		is infinite, where $A$ is the set given in Theorem \ref{th1}.
	\end{definition}
\section{Frame of the proof}\label{secframe}

	We will construct an infinite subset
	$G$ of $A$ satisfying all requirements
	$\R_{\Psi}^u$, using the following lemma.
	\begin{lemma}\label{bjoernAdd}
		Suppose there exists a sequence of conditions
		\[
			d_0\geq d_1\geq\cdots \geq d_s\geq\cdots,
		\]
		$d_s = (k_s,\sigma^s_{0},
		\cdots,\sigma^s_{k_s-1},Q_s)$, with
		$Q_s\neq \emptyset$
		such that
		\begin{itemize}
			\item for every $\Psi$, $u\in\omega$,
		$d_{\langle\Psi,u\rangle}$ forces
		$\mathcal{R}_\Psi^u$, and
			\item for all $s$, $|\{n:\sigma_i^s(n)=1\}|\geq s$
			for all $i$ in the acceptable parts
			of $d_s$.
		\end{itemize}
		Then there exists an infinite subset
		$G$ of $A$ satisfying all requirements
		$\R_{\Psi}^u$.
	\end{lemma}
	\begin{proof}
		Note that if such a sequence of condition exists,
		then the initial segments of acceptable parts of each
		condition $d_s$ forms a tree $\mathcal{T}$:
		the nodes in the $s^{th}$ level
		are $\sigma_i^s$, with $i$ being an acceptable parts
		of $d_s$; the predecessor of $\sigma_i^s$ is
		$\sigma^{s-1}_{f_s(i)}$ where $f_s$ witnesses that
		$d_{s-1}\geq d_s$.
		Obviously, every condition $d_s$
		admits some acceptable part since $Q_s$
		is a partition class.
		Therefore $\mcal{T}$ is finitely branching
		and infinite. Thus there is an infinite
		path through $\mcal{T}$,
		namely $\sigma^s_{i_s},s\in\omega$.
		By the definition of extension,
		$\sigma^{s+1}_{i_{s+1}}\succeq \sigma^{s}_{i_{s}}$,
		so $G=\cup_s \sigma^s_{i_s}$ is well-defined.
		By the definition of condition,
		$G\subseteq A$. Since $i_s$ is an
		acceptable part of $d_s$, $|\{n:\sigma^s_{i_s}(n)\}|\ge s$.
		Thus $G$ is infinite.
		Moreover, for each $Q_s$, by compactness,
		there exists $X_0\oplus X_1\oplus\cdots\oplus X_{k_s-1}\in Q_s$
		such that $G\subseteq X_{i_s}/\sigma^s_{i_s}$.
		To see this, fix an arbitrary $Q_s$,
		note that by the definition of extension,
		for any $s'>s$, the set $Q_{s,s'} = \big\{X_0\oplus\cdots\oplus X_{k_s-1}\in Q_s:
		\sigma^{s'}_{i_{s'}}\subseteq X_{i_s}/\sigma^s_{i_s}\big\}
		\ne\emptyset$ is a closed set and
		$Q_{s,s'+1}\subseteq Q_{s,s'}$.
		Thus $\cap_{s'>s} Q_{s,s'}\ne\emptyset$.
		So there exists $X_0\oplus\cdots\oplus X_{k_s-1}\in\cap_{s'>s} Q_{s,s'}
		\subseteq Q_s$ such that $G\subseteq X_{i_s}/\sigma^s_{i_s}$,
		i.e., $G$ satisfy part $i_s$ of condition $d_s$.
		Thus $G$ satisfies all requirements. 
	\end{proof}
	Now it remains to show that a sequence of conditions as in Lemma \ref{bjoernAdd} exists.
	First, we note that it is trivial to ensure
	that whenever $i$ is an acceptable part of $d_s$,
	then the initial segment of part $i$, namely $\sigma_i$,
	contains more than $s$ many elements.
	\begin{lemma}\label{lem1}
		For every condition $d=(k,\sigma_0,\cdots,\sigma_{k-1},Q)$
		and every
		$s\in \omega$, if $Q\neq \emptyset$,
		then there is a condition $d'\leq d$
		such that for every acceptable part $i$
		of $d'$, the initial segment $\sigma_i'$ of $d'$
		contains at least $s$ many elements.
	\end{lemma}
	\begin{proof}
		If $Q\neq \emptyset$, then
		$Q$ admits some acceptable part
		since
		$Q$ is a partition class.
		We simply extend
		each initial segment of condition
		$d$'s acceptable
		parts to
		include at least $s$ many elements
		until every initial segment of any
		acceptable part of the current
		condition $d'$ contains more than $s$ elements in $A$.
		\footnote{Note that
		after some
		extension of the initial segments of the other parts,
		an originally acceptable part may become unacceptable.
		So it is not necessary that all acceptable parts
		of $c$ are extended.}.
	\end{proof}
	Now it remains to show that every
	requirement $\R_\Psi^u$ can be forced
	by extending the condition.
	\begin{lemma}\label{lem2}
		Given any requirement
		$\R_\Psi^u$ and
		any condition $d=(k,\sigma_0,\cdots,\sigma_{k-1},Q)$,
		there is a
		condition
		$d'\leq d$ that forces $\mathcal{R}_\Psi^u$.
	\end{lemma}

	Lemma \ref{lem2}
	clearly follows from the following Lemma
	\ref{lem3}. For any condition
	$d$ let $U(d)$ denote the set of parts of $d$\footnote{``The set of parts of $d$ that do not force'' is shorthand for ``the set of all $i$ such that part $i$ of $d$ does not force''.}
	that do not force $\mathcal{R}_\Psi^u$.
	\begin{lemma}\label{lem3}
		Given any requirement $\mathcal{R}_\Psi^u$
		and any condition
		$d=(k,\sigma_0,\ldots,\sigma_{k-1},Q)$
		with $U(d)\ne\emptyset$,
		there exists a condition
		$d'=(k',\sigma_0',\cdots,\sigma_{k'-1}',Q')\leq d$,
		such that $|U(d')|<|U(d)|$.
	\end{lemma}

	The next section is devoted to
	the proof of Lemma \ref{lem3}.

\section{Proof of Lemma \ref{lem3}}\label{sec3}
	Fix the condition
	$d=(k,\sigma_0,\ldots,\sigma_{k-1},Q)$
	with $ U(d)\ne\emptyset$
	and the requirement
	$\mathcal{R}_\Psi^u$ given in Lemma \ref{lem3}.
	For any $\t{n}$,
	let
	\begin{equation}\label{pad}
		\sigma_i^{\t{n}} = \sigma_i\, 0^{\t{n}-|\sigma_i|}.
	\end{equation}
	Thus, we pad $\sigma_i$ with zeros to achieve length $\t{n}$.

	For any $m\in\omega$, $V\subseteq j^m$, and $\t n$, we define a class $Q_V^{\t n}$ by the condition that
	$\t{X}_0\oplus\cdots\oplus \t{X}_{2k-1}\in Q_V^{\t n}$ iff the following two conditions hold:
	\begin{enumerate}
		\item There exists $X_0 \oplus \cdots \oplus X_{k-1} \in Q$ with
			$X_i=\t{X}_{2i} \cup \t{X}_{2i+1}$ for all $i\leq k-1$;
		\item For each $i\in U(d)$ and each $\sigma'_{2i}\succeq \sigma^{\t{n}}_i$, $\sigma'_{2i+1}\succeq \sigma^{\t{n}}_i$,
			with $\sigma'_{2i}-\sigma^{\t{n}}_i\subseteq\t{X}_{2i}$ and $\sigma'_{2i+1}-\sigma^{\t{n}}_i\subseteq \t{X}_{2i+1}$,
			we have:
			\begin{eqnarray*}
				\Psi^{\sigma'_{2i}}(m)\downarrow&\rightarrow& \Psi^{\sigma'_{2i}}(m)\cap V\ne\emptyset,\\
				\Psi^{\sigma'_{2i+1}}(m)\downarrow&\rightarrow&\Psi^{\sigma'_{2i+1}}(m)\cap V\ne\emptyset.
			\end{eqnarray*}
	\end{enumerate}
	Thus in forming the class $Q_V^{\t n}$ we pad the strings in condition $d$ to achieve length $\t n$, we split the parts of $Q$ to form a $2k$-partition class from a $k$-partition class, and we force meeting of the set $V$.

	Note that for every $\t{n}$, $V$,
	\begin{itemize}
	\item
	$Q_V^{\t{n}}$ is a $\Pi_1^0$ $2k$-partition
	class;
	\item The set $\big\{V'\subseteq 2^{<\omega}: \text{for some } m\in\omega,
	V'\subseteq j^m; \text{ and }
	Q^{\t{n}}_{V'}\ne\emptyset
	\ \big\}$ is
	co-c.e. (uniformly in $\t{n}$).
	\end{itemize}


	\begin{definition}[Dispersedness]\label{disperse}
		A collection of sets $\{V_n\}_{n\leq N-1}$
		is $k$-\emph{dispersed}
		iff for every $k$-partition of
		$\{0,\cdots,N-1\}$,
		namely $W_0,\cdots,W_{k-1}$,
		there exists a part $W_{k'}$ such
		that $W_{k'}\ne\emptyset$
		and $\bigcap\limits_{n \in W_{k'}}V_n = \emptyset$.
	\end{definition}
		\footnote{
			In Definition \ref{disperse}, a $k$-partition of $N=\{0,\dots,N-1\}$ is a partition of $N$ into $k$ equivalence classes or blocks.
			It may be easier to consider the negation: a collection of sets is \emph{not} $k$-dispersed iff it can be partitioned into $k$ subcollections,
			each having nonempty intersection.
		}
	For every
	$\t{n}\geq \max\limits_{i\leq k-1}|\sigma_i|$,
	$m\in\omega$, consider
	the collection of
	clopen sets
	\[
		\text{Meetable}_m^{\t n} := \big\{V\subseteq j^m: Q_V^{\t{n}}\neq \emptyset
	\big\}.
	\]
	To prove Lemma \ref{lem3}
	we distinguish the following four cases.
	\begin{itemize}
		\item[Case 1.] For every $m\in\omega$ and every $\t{n}\geq\max\limits_{i\leq k-1}
		\{|\sigma_i|\}$,
		$\text{Meetable}_m^{\t n}$ is
		not $2kl_\Psi$-dispersed.
		Moreover, there exists
		$\t{n}\geq\max\limits_{i\leq k-1}
		\{|\sigma_i|\}$ such that
		for every $m\in\omega$,
		$S^u\cap j^m\in\text{Meetable}_m^{\t n}$.

		\item[Case 2.] For every $\t{n}\geq\max\limits_{i\leq k-1}\{|\sigma_i|\}$,
		there exists $ m\in\omega$ such that
		\[
			S^u\cap j^m\notin\text{Meetable}_m^{\t n}.
		\]
		Moreover, for every $m\in\omega$, every $i\in U(c)$, every
		$X=X_0\oplus\cdots\oplus X_{k-1}\in Q$,
		and every $\sigma'\succeq\sigma_i$, with $\sigma'-\sigma_i\subseteq
		X_i\cap A$
		we have that
		\[
			\Psi^{\sigma'}(m)\downarrow\quad\rightarrow\quad
			\Psi^{\sigma'}(m)\cap S^u\cap j^m\ne\emptyset.
		\]

		\item[Case 3.] There exist
		$m\in\omega$, $i\in U(d)$,
		$X=X_0\oplus\cdots\oplus X_{k-1}\in Q$,
		and $\sigma'\succeq \sigma_i$ with $\sigma'-\sigma_i\subseteq X_i\cap A$
		such that $\Psi^{\sigma'}(m)\downarrow$ and $S^u\cap j^m=\emptyset$.

		\item[Case 4.] There exists $m\in\omega$ and $\t{n}\geq\max\limits_{i\leq k-1}
		\{|\sigma_i|\}$
		such that
		$\text{Meetable}_m^{\t n}$ is $2kl_\Psi$-dispersed.
	\end{itemize}

	The four cases cover all the possibilities.
	Indeed, if Case 4 fails, then the first part of Case 1 obtains.
	Then either the second part of Case 1 obtains, or the first part of Case 2 obtains. Then either the second part of Case 2 obtains, or Case 3 obtains.

	We show that
	in Case 1 $S^u$
	admits a bounded enumeration, a
	contradiction;
	in Case 2 the set $A$ would be effectively
	compressible, also a contradiction; in
	Case 3 we construct
	a condition $d'\leq d$ with
	$| U(d')|< |U(d)|$; and
	in Case 4 we construct $d'\leq d$
	such that $U(d')=\emptyset$.
	Therefore the proof is accomplished
	once these are established.
	Now we begin to address each case.

	\begin{lemma}[Case 3 Lemma]\label{lem7}
		If there exist
		$m\in\omega$, $i\in U(d)$,
		$X=X_0\oplus\cdots\oplus X_{k-1}\in Q$,
		and $\sigma'\succeq \sigma_i$ with $\sigma'\subseteq X_i\cap A$
		such that $\Psi^{\sigma'}(m)\downarrow\notin S^u\cap j^m$,
		then there exists a condition $d'$ with identical
		number of parts such that part $i$ of
		$d'$ refines part $i$ of $d$ and
		$d'$ forces $\mathcal{R}_\Psi^u$ on part $i$. Thus,
		$|U(d')| <|U(d)|$.
	\end{lemma}
	\begin{proof}
		Simply extend $\sigma_i$ to $\sigma'$ and keep every
		other
		parts' initial segment.
		That is,
		$d' = (k,\sigma_0,\cdots,\sigma_{i-1},
		\sigma',\sigma_{i+1},\cdots,\sigma_{k-1},
		Q')$ is the desired condition forcing
		$\R_\Psi^u$,
		where $Q' = \big\{X'_0\oplus\cdots\oplus X'_{k-1}\in Q:
		\sigma'-\sigma_i\subseteq X'_i\big\}$
		is clearly nonempty since $X\in Q'$.
	\end{proof}

	\begin{lemma}[Case 1 Lemma]\label{lem6}
		Suppose for every $m\in\omega$, every $\t{n}\geq\max\limits_{i\leq k-1}
		\{|\sigma_i|\}$,
		$\text{Meetable}_m^{\t n}$ is
		not $2kl_\Psi$-dispersed.
		And suppose there exists a
		$\t{n}\geq\max\limits_{i\leq k-1}
		\{|\sigma_i|\}$ such that
		for every $m\in\omega$,
		$S^u\cap j^m\in\text{Meetable}_m^{\t n}$.
		Then $S^u$ admits a bounded enumeration. 
	\end{lemma}
	\begin{proof}
		Fix $\t{n}\geq \max\limits_{i\leq k-1}
		\{|\sigma_i|\}$ such that
		for every $m\in\omega$,
		$S^u\cap j^m\in\text{Meetable}_m^{\t n}$
		and $\text{Meetable}_m^{\t n}$ is not $2kl_\Psi$-dispersed
		(promised by the conditions
		of this lemma).
		The
		set $\text{Meetable}_m^{\t n}$
		is co-c.e uniformly in $m,\t{n}$. Thus, for an arbitrary
		$m\in\omega$, to obtain a $2kl_\Psi$-size
		subset of $j^m$ that has
		nonempty intersection with $S^u\cap j^m$,
		we wait for a time
		$t$ such that $\big\{\ V\subseteq j^m: Q_V^{\t{n}}[t]\neq \emptyset
		\big\}=\{V_0,\cdots,V_{N-1}\}$ is not
		$2kl_\Psi-$dispersed.
		Such a time $t$ must exist since $\text{Meetable}_m^{\t n}$
		is not $2kl_\Psi$-dispersed.
		Let $W_0,\cdots,W_{2kl_\Psi-1}$
		be a partition of $\{0,\cdots,N-1\}$ witnessing
		that $\{V_0,\cdots,V_{N-1}\}$
		is not $2kl_\Psi$-dispersed i.e.,
		\[
			\bigcup\limits_{j\leq 2kl_\Psi-1} W_j =\{0,\cdots,N-1\}
		\] and
		for every $j\leq 2kl_\Psi-1$,
		$W_j\ne\emptyset$ implies $
		\bigcap\limits_{n\in W_j} V_n
		\neq \emptyset$.

		Then, for each $j\leq 2kl_\Psi-1$ with $W_j\ne
		\emptyset$,
		select one element, namely $\rho_j$, from
		$\bigcap\limits_{n\in W_j} V_n$.
		Because $S^u\cap j^m\in \text{Meetable}_m^{\t n}
		\subseteq\big\{\ V\subseteq j^m: Q_V^{\t{n}}[t]\neq \emptyset
		\big\}$, there
		exists some $\t{\jmath}$ such that
		$S^u\cap j^m\in W_{\t{\jmath}}$.
		Therefore $\rho_{\t{\jmath}}\in \bigcap\limits_{n\in W_{\t{\jmath}}} V_n
		\subseteq S^u\cap j^m$.
		Thus,
		$\{\rho_j\}_{j\leq 2kl_\Psi,W_j\ne\emptyset}$ is a
		$2kl_\Psi$-enumeration of $S^u\cap j^m$.
		Finally, the conclusion follows by noticing
		that the procedure is uniform in $m$.
	\end{proof}

	Next, we deal with Case 2.
	\begin{lemma}\label{lem4}
		Suppose that for every $\t{n}\geq\max\limits_{i\leq k-1}\{|\sigma_i|\}$,
		there exists $m\in\omega$ such that
		$S^u\cap j^m\notin\text{Meetable}_m^{\t n}$.
		And suppose that for every $m\in\omega$, every $i\in U(d)$, every
		$X=X_0\oplus\cdots\oplus X_{k-1}\in Q$,
		and every $\sigma'\succeq\sigma_i$ with $\sigma'-\sigma_i\subseteq
		X_i\cap A$
		we have that
		\[
			\Psi^{\sigma'}(m)\downarrow\quad\rightarrow\quad
			\Psi^{\sigma'}(m)\cap S^u\cap j^m\ne\emptyset.
		\]
		Then $A$ is effectively compressible.
	\end{lemma}
	\begin{proof}
		The proof concerns the
		effectiveness of $S^u$.
		Given $N\in\omega$, we compute in the following way
		an $M\in\omega$
		such that $K_U(A\upharpoonright M)\leq M-N+\text{const}$,
		where $\text{const}$ is a constant that does not depend on
		$M,N$.
		To prove this,
		we show that given any $\t{n}$,
		there exists
		$\t{n}'>\t{n}$ computable from $\t{n}$
		and a set $F\subseteq 2^{\t{n}'-\t{n}}$ (computably enumerable uniformly in $\t{n}$),
		such that
		$|F|\leq \frac{1}{2}\cdot 2^{\t{n}'-\t{n}}
		\wedge A\uhr_{\t{n}}^{\t{n}'-1}\in F$.
		Clearly, this is enough for our goal
		since in this way,
		there is a computable
		sequence of integers $\t{n}_0<\t{n}_1<\cdots$
		and a sequence of uniformly c.e. sets $ F_l,l\in\omega$
		such that
		\[
			F_l\subseteq 2^{\t{n}_{l+1}-\t{n}_l},\quad
			|F_l|\leq \frac{1}{2}\cdot 2^{\t{n}_{l+1}-\t{n}_l},\quad\text{and }
			A\uhr_{\t{n}_l}^{\t{n}_{l+1}-1}\in F_l
		\]
		for all $l\in\omega$. Thus
		$K_U(A\upharpoonright \t{n}_N)\leq \t{n}_N-N+\text{const}$.

		Given $\t{n}$, since there exists
		$m$ such that $S^u\cap j^m\notin\text{Meetable}_m^{\t n}$,
		which means $Q_{S^u\cap j^m}^{\t{n}}=\emptyset$,
		then we have that there exists $t^*$ such that
		$Q_{S^u[t^*]\cap j^m}[t^*]=\emptyset$.
		Let $T$ denote
		the pruned co-c.e. tree associated to $Q$.
		\begin{definition}
			A set $\h{A}$ is \emph{diagonal against $S^u$
			at time $t$ on part $i$} if
			there exists $\sigma'\succeq \sigma_i^{\t{n}}$
			with $\sigma'-\sigma_i^{\t{n}}\subseteq \h{A}$
			such that $\Psi^{\sigma'}(m)[t]\downarrow$ and $S^u[t]\cap j^m=\emptyset$.
		\end{definition}
		By a compactness argument,
		$Q_{S^u[t]\cap j^m}[t^*]=\emptyset$
		implies that
		there exists $\t{n}'$ such that
		for every $2$-partition $A_0\oplus A_1$ of
		$\{\t{n},\cdots,\t{n}'-1\}$
		and every $k$-partition $X_0\oplus\cdots\oplus
		X_{k-1}\in T[t]$
		of $\{0,\cdots,\t{n}'-1\}$,
		there exists $c\in \{0,1\}$
		and $i\in U(d)$, such that $X_i\cap A_c$ is diagonal against
		$S^u$ at time $t^*$ on part $i$.
		For any $t'\geq t^*$,
		let $\hat{F}[t']$ be the set of $2$-partitions $A_0\oplus A_1$
		of $\{\t{n},\cdots,\t{n}'-1\}$ such that the following are satisfied.
		\begin{enumerate}
			\item For every $k$-partition
				$X_0\oplus\cdots\oplus X_{k-1}\in T[t']$
				of
				$\{0,\cdots,\t{n}'-1\}$ there exists a
				$i\in U(d) $,
				such that $X_i\cap A_1$ is diagonal against
				$S^u$ at time $t'$ on part $i$;
			\item For each $k$-partition
				$X_0\oplus\cdots\oplus X_{k-1}\in T[t']$
				of
				$\{0,\cdots,\t{n}'-1\}$ and
				each $i\in U(d)$, $X_i\cap A_0$ is not diagonal against $S^u$ at time $t'$ on part $i$.
		\end{enumerate}
		Let $F = \cup_{t'\geq t}\hat{F}[t']$.
		Clearly $\hat{F}[t']$ is computable uniformly in
		$t'\geq t$. Therefore $F$ is computably enumerable.

		\begin{claim}
			For each $t''\geq t'\geq t^*$ and
			each $A_0\oplus A_1\in \hat{F}[t']$, we have that
			$A_1\oplus A_0\notin \hat{F}[t'']$.
		\end{claim}
		\begin{proof}
			Suppose $A_0\oplus A_1\in \hat{F}[t']$.
			By item (1) and since $[T]\ne\emptyset$, there exists
			a $k$-partition
			$X^*_0\oplus\cdots\oplus X^*_{k-1}\in T$
			of
			$\{0,\cdots,\t{n}'-1\}$
			and $i^*\in U(d)$
			such that $X_i^*\cap A_1$ is diagonal against $S^u$
			at time $t'$ on part $i$.
			Therefore, it is impossible that
			for some $t''\geq t'$, for every $k$-partition
			$X_0\oplus\cdots\oplus X_{k-1}\in T[t'']$
			of
			$\{0,\cdots,\t{n}'-1\}$,
			every $i\in U(d)$, $X_i\cap A_1$ is not diagonal against
			$S^u$ at time $t'' $ on part $i$
			with $X^*_0\oplus\cdots\oplus X^*_{k-1},i^*$
			being a witness of this impossibility.
			This impossibility implies $A_1\oplus A_0\notin \hat{F}[t'']$.
		\end{proof}

		Since we have shown that
		for every $t''\geq t'$,
		$\ A_0\oplus A_1\in \hat{F}[t']
		$ implies $ A_1\oplus A_0\notin \hat{F}[t'']$,
		we can conclude
		\[
			A_0\oplus A_1\in F\quad\rightarrow\quad
			A_1\oplus A_0\notin F.
		\]
		So $|F|\leq \frac{1}{2}\cdot 2^{\t{n}'-\t{n}}$.

		Let $A_0^* = A\cap \{\t{n},\cdots,\t{n}'-1\}$, $A_1^* = \{\t{n},\cdots,\t{n}'-1\}-A$.
		It remains to prove the following claim.
		\begin{claim}
			$A^*_0\oplus A^*_1\in F$.
		\end{claim}
		\begin{proof}
			By the definitions of $\t{n}'$ and $t^*$, since $T\subseteq T[t^*]$,
			we have that for every $k$-partition $X_0\oplus\cdots\oplus
			X_{k-1}\in T$
			of $\{0,\cdots,\t{n}'-1\}$,
			there exist $c\in \{0,1\}$ and
			$i\in U(d)$
			such that $X_i\cap A_c^*$ is diagonal against $S^u$
			at time $t^*$ on part $i$.
			Moreover, by the conditions of this lemma,
			we have that
			for each $k$-partition $X_0\oplus\cdots\oplus
			X_{k-1}\in T$
			of $\{0,\cdots,\t{n}'-1\}$,
			each $i\in U(d)$,
			and each $t\in\omega$,
			$X_i\cap A_0^*$ is not diagonal against
			$S^u$ at time $t$ on part $i$.
			These together implies that
			for every $k$-partition $X_0\oplus\cdots\oplus
			X_{k-1}\in T$
			of $\{0,\cdots,\t{n}'-1\}$,
			there exists
			$i\in U(d)$,
			such that $X_i\cap A_1^*$ is diagonal against
			$S^u$ at time $t^*$ on part $i$.
			But
			\begin{eqnarray*}
			\lim\limits_{t'\rightarrow\infty} T[t']&\cap&
			\big\{\text{$k$-partitions of }\{0,\cdots,\t{n}'\}\big\}\\
			= T&\cap& \big\{\text{$k$-partitions of }\{0,\cdots,\t{n}'\}\big\}
			\end{eqnarray*}
			and
			\[
			\lim\limits_{t'\rightarrow\infty}S^u[t']\cap j^m
			=S^u\cap j^m,
			\]
			so there exists a sufficiently large $t''$ such that
			for every $k$-partition $X_0\oplus\cdots\oplus
			X_{k-1}\in T[t'']$
			of $\{0,\cdots,\t{n}'-1\}$,
			there exists
			$i\in U(d)$,
			such that $X_i\cap A_1^*$ is diagonal against $S^u$
			at time $t''$ on part $i$;
			and for every $k$-partition $X_0\oplus\cdots\oplus
			X_{k-1}\in T[t'']$
			of $\{0,\cdots,\t{n}'-1\}$,
			every $i\in U(d)$,
			$X_i\cap A_0^*$ is not diagonal against $S^u$
			at time $t''$ on part $i$.
			Thus $A_0^*\oplus A_1^*\in \hat{F}[t'']\subseteq F$.
		\end{proof}
		This concludes the proof of Lemma \ref{lem4}.
	\end{proof}

	Finally we deal with
	Case 4 which is the key to the proof.

	\begin{lemma}\label{lem5}
		If there exists $m\in\omega$ and $\t{n}\geq\max\limits_{i\leq k-1}
		\{|\sigma_i|\}$
		such that
		$\big\{V\subseteq j^m: Q_V^{\t{n}}\neq \emptyset
		\big\}$ is $2kl_\Psi$-dispersed,
		then there exists $d'\leq d$ such that $d'$
		forces $\mathcal{R}_\Psi^u$.
	\end{lemma}

	\begin{proof}
	We begin by introducing a set operation $\Cross$.

	\begin{definition}[Cross]\label{defcross}
	Given arbitrary $k,N\in\omega$,
	given $N$ many $2k$-partitions
	of $\omega$, namely $\t{X}^n=\t{X}^n_0\oplus\cdots\oplus \t{X}^n_{2k-1}$,
	$n\leq N-1$,
	and a collection $\mathcal{K}$ of nonempty
	subsets of $\{0,\cdots, N-1\}$, we
	define an operation $\Cross$ as follows.
	\[
		\Cross(\t{X}^0,\t{X}^1,\cdots ,\t{X}^{N-1};\mathcal{K}) =
		\bigoplus\limits_{i<2k,K\in\mcal{K}} Y_{i,K}
	,\]
	where $Y_{i,K}
	= \bigcap\limits_{n\in K} \t{X}^n_i $.

	For $N$ many nonempty
	$2k$-partition classes, $Q_0,\cdots,Q_{N-1}$,
	we let
	\begin{eqnarray*}
		\Cross(Q_0,\cdots,Q_{N-1};\mathcal{K})=
		\{\Cross(\t{X}^0,\cdots ,\t{X}^{N-1};
		\mathcal{K})\in 2^\omega:
		\t{X}^n\in Q_n,
		n< N
		\}.
	\end{eqnarray*}
	\end{definition}
	The following claim is easy to verify.
	\begin{claim}\label{fac4}For any
		$N$ many nonempty $\Pi_1^0$
		$2k$-partition classes, $Q_0,\cdots,Q_{N-1}$,
		and any nonempty collection $\mathcal{K}$ of nonempty
		subsets of $\{0,\cdots,N-1\}$,
		\[
			\Cross(Q_0,\cdots, Q_{N-1};
		\mathcal{K})
		\]
		is a nonempty $\Pi_1^0$ class.
	\end{claim}
	Now fix $m$ and $\t{n}$ such that
	$\{V\subseteq j^m:Q^{\t{n}}_V\ne\emptyset\}
	=\{V_0,\cdots,V_{N-1}\}$
	is $2kl_\Psi$-dispersed.
	Let $\mcal{K}$ be the following
	collection of nonempty subsets of $\{0,\cdots,N-1\}$:
	\begin{align}\nonumber
		\mathcal{K} =
		\big\{K\subseteq \{0,\cdots, N-1\}: \{V_n\}_{n\in K}
		\textrm{ is }l_\Psi\text{-dispersed}\big\}.
	\end{align}
	Define a $\Pi_1^0$ class as follows:
	\begin{align}\label{defQ}
		Q' = \Cross(Q^{\t{n}}_{V_0},\cdots,Q^{\t{n}}_{V_{N-1}};\mathcal{K}).
	\end{align}
	The desired condition $d'$ is
	\[
		d'=(2k\cdot|\mathcal{K}|,\mathbf{\sigma}^d,Q'),
	\]
	where $\mathbf{\sigma}^d= \{\sigma^{\t{n}}_{i,K}\}_{i<2k,K\in\mcal{K}}$
	represents the
	corresponding replication of $\sigma^{\t{n}}_0,
	\cdots,\sigma^{\t{n}}_{k-1}$, i.e.,
	$\sigma^{\t{n}}_{i,K} = \sigma^{\t{n}}_{[i/2]}$
	for all $i<2k$, $K\in\mcal{K}$ as in Equation (\ref{pad}).

	\begin{claim}\label{claim0}
		$Q'$ is a partition class of $\omega$.
	\end{claim}
	\begin{proof}
		Fix an arbitrary
		$x\in \omega$ and an arbitrary
		$Y\in Q'$.
		By the definitions of $\Cross$ and $Q'$,
		there exists $\t{X}^n =
		\t{X}^n_0\oplus\cdots\oplus \t{X}^n_{2k-1}
		\in Q^{\t{n}}_{V_n},n\leq N-1$ such that
		(see (\ref{defQ})),
		\[
			Y=\Cross(\t{X}^0,\cdots, \t{X}^{N-1};\mathcal{K}).
		\]
		So $Y = \bigoplus\limits_{i<2k,K\in\mcal{K}}
		Y_{i,K}$ with $Y_{i,K}=
		\bigcap\limits_{n\in K}\t{X}^n_i$.
		Consider the following $2k$-partition of $\{0,\cdots,N-1\}$:
		$W_i = \{n\leq N-1: x\in \t{X}^n_i\}$, $i<2k$.
		If for some $\t{\imath}\leq 2k-1$, $W_{\t{\imath}}\in \mcal{K}$,
		then we are done since this implies
		$x\in Y_{i,W_{\t{\imath}}}$.
		Suppose on the contrary $W_i\notin \mcal{K}$ for all
		$i<2k$.
		By the definition of $\mcal{K}$, we have that for each $i<2k$,
		$\{V_n\}_{n\in W_i}$ is not $l_\Psi$-dispersed.
		By the definition of dispersedness (Definition \ref{disperse}), for each $i<2k$,
		there exists an $l_\Psi$-partition of $W_i$, namely
		$W_{i,0},\cdots,W_{i,l_\Psi-1}$ such that
		$W_{i,l}\ne\emptyset$ implies $\bigcap\limits_{n\in W_{i,l}}V_n \ne\emptyset$.
		But then
		\[
			\{W_{i,l}\}_{i<2k,\, l<l_\Psi}
		\]
		is a
		$2kl_\Psi$-partition of $\{0,\cdots,N-1\}$ such that
		for every $i<2k,l<l_\Psi$,
		$W_{i,l}\ne\emptyset$ implies $\bigcap\limits_{n\in W_{i,l}}V_n \ne\emptyset$,
		a contradiction to the $2kl_\Psi$-dispersedness
		of $\{V_0,\cdots,V_{N-1}\}$.
	\end{proof}

	\begin{claim}
	$d'$ is
	a condition extending $ d$.
	\end{claim}
	\begin{proof}
		By the $2kl_\Psi$-dispersedness of $\{V_0,\cdots,V_{N-1}\}$,
		which implies $l_\Psi$-dispersedness
		of $\{V_0,\cdots,V_{N-1}\}$,
		we have
		$\mathcal{K}\ne\emptyset$ since
		$\{0,\cdots,N-1\}\in\mathcal{K}$.
		By the definition of
		dispersedness, every $K\in\mathcal{K}$ is nonempty.
		So $Q'$ is well-defined.
		Clearly $Q'$ is a $\Pi_1^0$ class
		by Claim \ref{fac4}.
		It is also easy to see
		that $Q'\ne\emptyset$
		by the fact that
		$Q^{\t{n}}_{V_n}\ne\emptyset$ for all $n\leq N-1$
		and Claim \ref{fac4}.
		By Claim \ref{claim0}, $Q'$ is a $2k|\mcal{K}|$-partition
		class. Thus $d'$ is a condition.
		To see that $d'\leq d$,
		note that for every $Y= \bigoplus\limits_{i<2k,K\in\mcal{K}}
		Y_{i,K} \in Q'$,
		every component $Y_{i,K}$ of $Y$, and every $n\in K$, there exists
		$\t{X}^n_0\oplus \cdots \oplus \t{X}^n_{2k-1}\in Q^{\t{n}}_{V_n}$
		such that
		$Y_{i,K}$ is contained in $\t{X}^n_i$.
		But by the definition of $Q^{\t{n}}_{V_n}$, $\t{X}^n_i$,
		for some
		$X_0\oplus\cdots\oplus X_{k-1}\in Q$, $\t{X}^n_i$
		is contained in $X_{[i/2]}\subseteq\omega$.
		Therefore $(\sigma^{\t{n}}_{i,K},Y_{i,K})
		\leq (\sigma_{[i/2]},X_{[i/2]})$.
		Moreover,
		for every $Y=\bigoplus\limits_{i<2k,K\in\mcal{K}} Y_{i,K}\in Q'$,
		$\sigma^{\t{n}}_{i,K}\subseteq Y_{i,K}$.
		This is because for some $X=X_0\oplus\cdots\oplus X_{k-1}\in Q$,
		$Y_{i,K}\subseteq X_{[i/2]}$
		and as set of integers $\sigma^{\t{n}}_{i,K}
		=\sigma_{[i/2]}\subseteq X_{[i/2]}$.
		Thus we have shown that
		part $i,K$ of $d'$
		refine part $[i/2]$ of $d$. Thus $d'\leq d$.
	\end{proof}

	It remains to prove that $d'$
	forces $\mathcal{R}_\Psi^n$.
	It is clear that
	the following claim
	implies that $\Psi^G(m)\uparrow$
	for all $G$ satisfying
	condition $d'$.

	\begin{claim}\label{fac3}
		For any $i<2k,K\in\mcal{K}$,
		any $Y=\bigoplus\limits_{i<2k,K\in\mcal{K}} Y_{i,K}\in Q'$
		any $\sigma'\succeq \sigma^{\t{n}}_{i,K}$ with
		$\sigma'-\sigma^{\t{n}}_{i,K}\subseteq Y_{i,K}$
		we have: $\Psi^{\sigma'}(m)\uparrow$.
	\end{claim}
	\begin{proof}
		By the definitions of $\Cross$ and $Q$,
		there exists $\t{X}^n =
		\t{X}^n_0\oplus\cdots\oplus \t{X}^n_{2k-1}
		\in Q^{\t{n}}_{V_n},n\leq N-1$ such that
		(see (\ref{defQ})),
		$Y_{i,K} = \bigcap\limits_{n\in K}\t{X}^n_i$.
		Suppose that for some $0\leq l'\leq l_\Psi-1$, $\rho_0,\cdots,\rho_{l'}$
		are all the elements in $\Psi^{\sigma'}(m)\downarrow$.
		Consider the following $l_\Psi$ many subsets of
		$K$:
		\[
			W_l=\begin{cases}
			\{n\in K: \rho_l\in V_n\} & \text{if }l\leq l',\\
			\emptyset& \text{if }l'<l<l_\Psi.
			\end{cases}
		\]
		By the definition of $Q_{V_n}^{\t{n}}$,
		$\Psi^{\sigma'}(m)\downarrow\rightarrow \Psi^{\sigma'}(m)\cap V_n\ne\emptyset$
		for all $n\in K$.
		Therefore $W_l,l<l_\Psi$ is an $l_\Psi$-partition
		of $K$.
		Clearly $W_l\ne\emptyset$ implies
		$\bigcap\limits_{n\in W_l} V_n\ne\emptyset$.
		This contradicts the definition of $\mcal{K}$,
		that for every $K\in\mcal{K}$,
		$\{V_n\}_{n\in K}$ is $l_\Psi$-dispersed.
	\end{proof}
		Thus we have finished the proof of Lemma \ref{lem2}.
	\end{proof}

	Lemma \ref{lem4} takes advantage of the constructibility of
	$S^u$ and randomness of $A$,
	while in \cite{liu2015cone} neither is
	needed.
	We cannot replace the noneffective compressibility
	by Schnorr randomness since in Lemma
	\ref{lem4} the set
	$F$ we construct is merely
	uniformly c.e. in $\t{n}$ instead of
	uniformly computable in $\t{n}$.
	We are curious whether Theorem
	\ref{th1} holds for a Schnorr random set $A$.
	\begin{question}
		Is there a Schnorr random set $A$ such
		that every infinite subset of $A$ computes
		some random set, or computes a
		bounded enumeration of some
		tree that does not admit a
		computable bounded enumeration?
	\end{question}
	We guess that the answer is ``no''.

\bibliographystyle{amsplain}
\bibliography{bibliographylogic}

\providecommand{\bysame}{\leavevmode\hbox to3em{\hrulefill}\thinspace}
\providecommand{\MR}{\relax\ifhmode\unskip\space\fi MR }
\providecommand{\MRhref}[2]{%
  \href{http://www.ams.org/mathscinet-getitem?mr=#1}{#2}
}
\providecommand{\href}[2]{#2}
\begin{thebibliography}{10}

\bibitem{Ambos-Spies2004Comparing}
Klaus Ambos-Spies, Bj{\o}rn Kjos-Hanssen, Steffen Lempp, and Theodore~A.
  Slaman, \emph{{Comparing DNR and WWKL}}, Journal of Symbolic Logic
  \textbf{69} (2004), no.~04, 1089--1104.

\bibitem{beigel2006enumerations}
Richard Beigel, Harry Buhrman, Peter Fejer, Lance Fortnow, Piotr Grabowski, Luc
  Longpr{\'e}, Andrej Muchnik, Frank Stephan, and Leen Torenvliet,
  \emph{Enumerations of the kolmogorov function}, The Journal of Symbolic Logic
  \textbf{71} (2006), no.~2, 501--528.

\bibitem{chong2014metamathematics}
Chitat Chong, Theodore Slaman, and Yue Yang, \emph{{The metamathematics of
  stable {R}amsey’s theorem for pairs}}, Journal of the American Mathematical
  Society \textbf{27} (2014), no.~3, 863--892.

\bibitem{kjos2013randomness}
Bjoern Kjos-Hanssen and Cameron~E Freer, \emph{Randomness extraction and
  asymptotic hamming distance}, Logical Methods in Computer Science \textbf{9}
  (2013).

\bibitem{Kjos-Hanssen2009Infinite}
Bj{\o}rn Kjos-Hanssen, \emph{Infinite subsets of random sets of integers},
  Mathematics Research Letters \textbf{16} (2009), 103--110.

\bibitem{kjos2011strong}
Bj{\o}rn Kjos-Hanssen, \emph{A strong law of computationally weak subsets},
  Journal of Mathematical Logic \textbf{11} (2011), no.~01, 1--10.

\bibitem{lerman2013separating}
Manuel Lerman, Reed Solomon, and Henry Towsner, \emph{{Separating principles
  below {R}amsey's theorem for pairs}}, Journal of Mathematical Logic
  \textbf{13} (2013), no.~02, 1350007.

\bibitem{Liu2012RT22}
Lu~Liu, \emph{{RT$^2_2$ does not imply WKL$_0$}}, Journal of Symbolic Logic
  \textbf{77} (2012), no.~2, 609--620.

\bibitem{liu2015cone}
\bysame, \emph{Cone avoiding closed sets}, Transactions of the American
  Mathematical Society \textbf{367} (2015), no.~3, 1609--1630. \MR{3286494}

\bibitem{miller2011extracting}
Joseph~S Miller, \emph{Extracting information is hard: a turing degree of
  non-integral effective hausdorff dimension}, Advances in Mathematics
  \textbf{226} (2011), no.~1, 373--384.

\bibitem{patey2015iterative}
Ludovic Patey, \emph{Iterative forcing and hyperimmunity in reverse
  mathematics}, CiE. Evolving Computability (Arnold Beckmann, Victor Mitrana,
  and Mariya Soskova, eds.), Lecture Notes in Computer Science, vol. 9136,
  Springer International Publishing, 2015, pp.~291--301 (English).

\bibitem{wang2014definability}
Wei Wang, \emph{The definability strength of combinatorial principles}, 2014,
  To appear. Available at http://arxiv.org/abs/1408.1465.

\end{thebibliography}

\end{document}